 \newtheorem{lm}{Lemma}[section]
  \newtheorem{te}{Theorem}[section]
\begin{document}

\setcounter{page}{1}

\title[Asymptotic behaviour]{The degree of the algebra of covariants of a binary form}
\author{Leonid Bedratyuk  and Nadia Ilash}
\address{Department of Applied Mathematics \\
                Khmelnytskyi National  University\\
                Khmelnytskyi, Instytutska ,11\\
                29016, Ukraine}
\email{leonid.uk@gmail.com}

\begin{abstract}
We calculate the degree of the algebra of covariants  $\mathcal{C}_d$ for binary $d$-form. Also, for the degree we obtain its   integral representation and  asymptotic behavior.
\end{abstract}

\maketitle

\section{Introduction}

Let  $R= R_0+R_1+\cdots $  be a finitely generated graded complex algebra, $R_0=\mathbb{C}.$ Denote  by
$$
\mathcal{P}(R,z)=\sum_{j=0}^\infty  \dim R_j z^j,
$$
its  Poincare series.
The number			
$$
\deg(R):=\lim_{z \to 1} (1-z)^r \mathcal{P}(R,z),
$$ 
is called the  degree of the algebra $R$. Here $r$ is  the transcendence degree  of the  quotient field of $R$ over $\mathbb{C}.$ The first two terms of  the Laurent series expansion of $\mathcal{P}(R,z)$  at  the point $z=1$ have  the following form 
$$
\mathcal{P}(R,z)=\frac{\deg(R)}{(1-z)^r}+\frac{\psi(R)}{(1-z)^{r-1}}+ \cdots
$$ 
The numbers $\deg(R), \psi(R)$ are important characteristics of the   algebra $R.$ For instance, if
 $R$ is an  algebra of invariants of a finite group  $G$ then  $\deg(R)^{-1}$ is order of the group $G$ and  $2 \dfrac{\psi(R)}{\deg(R)}$ is the number of the   pseudo-reflections in $G,$  see \cite{Ben}.

Let $V_d$  be standard $ d +1 $-dimensional complex representation of $SL_2$ and let   
 $\mathcal{I}_d:=\mathbb{C}[V_d]^{SL_2}$ be the corresponding   algebra of invariants. In the language of the classical invariant theory the algebra  $I_d$ is called the algebra of invariants  for $d$-binary form of degree $d.$ 
The following explicit formula for the degree $\deg(\mathcal{I}_d)$ was derived by Hilbert  in  \cite{Hilb}:
$$
\deg(\mathcal{I}_d)=\left\{  \begin{array}{l} \displaystyle 
-\dfrac{1}{4d!}\sum_{0 \leq e  < d/2}(-1)^e { d \choose e} \left(\frac{d}{2}-e \right)^{d-3}, \text{ if $d$  is odd},\\
\displaystyle -\dfrac{1}{2d!}\sum_{0 \leq e  < d/2}(-1)^e { d \choose e} \left(\frac{d}{2}-e \right)^{d-3}, \text{if $d$ is even}.
\end{array}
  \right.
$$
In \cite{SPB} and  \cite{SP} Springer obtained   two different proofs of this result. Also, he  had  found  an integral representation and asymptotic behavior for Hilbert's constants. For this purpose Springer used an  explicit formula for computation of the Poincare series $\mathcal{P}(\mathcal{I}_d,z)$  derived in his paper \cite{SP}.

Let $\mathcal{C}_d$ be  the algebra of  the covariants  if the binary $d$-form, i.e. $\mathcal{C}_d \cong \mathbb{C}[V_1 \oplus V_d]^{SL_2}.$ In the present  paper we calculate  $ \deg(\mathcal{C}_d)$ and   $\psi(\mathcal{C}_d)$.  Also, we calculate   an   integral representation and  asymptotic behavior of the constants. For this purpose we use the explicit formula for the Poincare series $\mathcal{P}(\mathcal{C}_d,z)$   derived by the first author in 
 \cite{BB}. 

\section{Computation of the $\deg(\mathcal{C}_d)$.}

The algebra of covariants $\mathcal{C}_d$    is  a finitely generated graded algebra
$$
\mathcal{C}_d=(\mathcal{C}_d)_0+(\mathcal{C}_d)_1+\cdots+(\mathcal{C}_d)_i+ \cdots,
$$
where each subspace $(\mathcal{C}_d)_i$ of covariants of the degree $i$ is finite-dimensional, $(\mathcal{C}_d)_0\cong \mathbb{C}.$ The formal power series 
$$
\mathcal{P}(\mathcal{C}_d,z)=\sum_{i=0}^{\infty }\dim((C_{d})_i) z^i,
$$ 
is called the Poincare series of the algebra of covariants $C_{d}.$ The finitely generation of  $C_{d}$ implies that its Poincare series is the power series expansion of a rational function.

The  following theorem show an explicit form for this rational function. Let  $\varphi_n,$ $n \in \mathbb{N}$ be the linear operator transforms a rational function $f$ in $z$  to the rational function  $ \varphi_n(f)$ determined on $z^n$ by 
$$
\left( \varphi_n(f) \right)(z^n)=\frac{1}{n}\sum_{j=0}^{n-1}f(\zeta_n^j z), \zeta_n=e^{\frac{2\pi i}{n}}.
$$

\begin{te}[\cite{BB}] The Poincare series  $\mathcal{P}(\mathcal{C}_d,z)$ has  the following form 
$$
\begin{array}{l}
\displaystyle \mathcal{P}(\mathcal{C}_d,z)=\sum_{0\leq j <d/2} \varphi_{d-2j} \left( \frac{(-1)^j z^{j(j+1)} (1+z)}{(z^2,z^2)_j\,(z^2,z^2)_{d-j}} \right),
\end{array}
$$

here  $(a,q)_n=(1-a) (1-a\,q)\cdots (1-a\,q^{n-1})$ --- $q$ is q-shifted factorial. 

\end{te}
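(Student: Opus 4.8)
The formula is quoted from \cite{BB}; here is the line of argument I would follow. The plan is to translate ``covariant'' into the language of $SL_2$-representations, rewrite $\mathcal{P}(\mathcal{C}_d,z)$ as a one--variable constant term (equivalently, a contour integral), and then evaluate that integral by residues, the poles splitting into $(d-2j)$--element families, each family producing exactly one summand $\varphi_{d-2j}(\,\cdot\,)$.

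\textbf{Reduction to a constant term.} A covariant of the binary $d$-form of degree $i$ and order $k$ is, canonically, an element of $\operatorname{Hom}_{SL_2}(V_k,S^iV_d)$, hence $\dim(\mathcal{C}_d)_i=\sum_{k\ge0}\operatorname{mult}(V_k,S^iV_d)$. For a finite-dimensional $SL_2$-module $M$ let $\operatorname{ch}(M)=\sum_n a_n x^n$ be its character at $\operatorname{diag}(x,x^{-1})$; then $a_n=a_{-n}$, only finitely many $a_n$ are nonzero, $\operatorname{mult}(V_k,M)=a_k-a_{k+2}$, and $\sum_{k\ge0}(a_k-a_{k+2})=a_0+a_1$. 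Since $\sum_{i\ge0}\operatorname{ch}(S^iV_d)z^i=\prod_{m=0}^{d}(1-x^{d-2m}z)^{-1}=:F(x,z)$, collecting the coefficients of $x^0$ and of $x^{-1}$ in $F$ (the latter equals that of $x^{1}$ by $F(x,z)=F(x^{-1},z)$) gives
$$
\mathcal{P}(\mathcal{C}_d,z)=\operatorname{CT}_x\big((1+x)F(x,z)\big)=\frac{1}{2\pi i}\oint_{|x|=\rho}\frac{(1+x)\,dx}{x\prod_{m=0}^{d}(1-x^{d-2m}z)},
$$
where $\operatorname{CT}_x$ is the constant term with respect to the expansion of $F$ with Laurent-polynomial-in-$x$ coefficients, realised by a circle $|x|=\rho$ with $|z|^{1/d}<\rho<|z|^{-1/d}$ (so $0<|z|<1$).

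\textbf{Residue computation.} For such $z$ the integrand is regular at $x=0$ (the factors $(1-x^{d-2m}z)^{-1}$ with $d-2m<0$ vanish there to high order), so the integral equals the sum of residues at the interior poles, which are precisely the zeros of the factors $(1-x^{d-2m}z)$ with $d-2m<0$. Write such an exponent as $-(d-2j)$ with $0\le j<d/2$; the associated poles are the $d-2j$ points $x=\zeta_{d-2j}^{\,l}z^{1/(d-2j)}$, $l=0,\dots,d-2j-1$, simple for generic $z$. A short computation gives the residue at one of them as $\dfrac{1}{d-2j}\,g_j(x_0)$ with $g_j(x)=(1+x)\prod_{m\ne d-j}(1-x^{d-2m}z)^{-1}$. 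Substituting $x=\zeta_{d-2j}^{\,l}t$, $z=t^{d-2j}$ and using $\zeta_{d-2j}^{\,d-2j}=1$ converts $g_j(x_0)$ into $G_j(\zeta_{d-2j}^{\,l}t)$, where, after reindexing the product by $r=d-m-j$,
$$
G_j(t)=\frac{1+t}{\prod_{r=1}^{j}(1-t^{-2r})\,\prod_{r=1}^{d-j}(1-t^{2r})}=\frac{(-1)^{j}t^{\,j(j+1)}(1+t)}{(t^2,t^2)_j\,(t^2,t^2)_{d-j}},
$$
the last equality using $\prod_{r=1}^{j}(1-t^{-2r})=(-1)^{j}t^{-j(j+1)}\prod_{r=1}^{j}(1-t^{2r})$ and $\prod_{r=1}^{n}(1-t^{2r})=(t^2,t^2)_n$. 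Therefore the total contribution of this family of poles is $\dfrac{1}{d-2j}\sum_{l=0}^{d-2j-1}G_j(\zeta_{d-2j}^{\,l}t)=\big(\varphi_{d-2j}(G_j)\big)(t^{d-2j})=\big(\varphi_{d-2j}(G_j)\big)(z)$ by the definition of $\varphi_n$, and summing over $0\le j<d/2$ yields the asserted formula.

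\textbf{Main obstacle.} The bookkeeping of the residue step is where the work lies: one must verify that $x=0$ contributes nothing and that exactly the ``small'' poles lie inside the circle; that the positive- and negative-exponent halves of $\prod_{m=0}^{d}(1-x^{d-2m}z)$ produce, after the rescaling $z\mapsto t^{d-2j}$, the two $q$-shifted factorials $(t^2,t^2)_{d-j}$ and $(t^2,t^2)_{j}$; that the negative-exponent half alone accounts for the sign $(-1)^j$ and the monomial $t^{j(j+1)}$; and that averaging the residues over the $(d-2j)$-th roots of unity is literally the operator $\varphi_{d-2j}$. Since the poles are simple for generic $z$, the resulting identity of rational functions holds identically, and the parity of $d$ needs no separate treatment beyond observing that for even $d$ the value $j=d/2$, which would correspond to the non-polar factor $1-z$, is correctly omitted by the constraint $j<d/2$.
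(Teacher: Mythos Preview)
The paper does not prove this theorem at all; it is quoted from \cite{BB} and used as input for the Laurent--expansion computations that follow. So there is no ``paper's own proof'' to compare against here.

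Your argument is correct and is essentially the standard derivation (and, as far as one can tell from the statement, the one in \cite{BB}). A couple of checks you only gesture at but which do go through: the residue of the bad factor $1-x^{-(d-2j)}z$ at a simple root $x_0$ gives exactly $(d-2j)^{-1}$ times the remaining product, because $\frac{d}{dx}\big(1-x^{-(d-2j)}z\big)\big|_{x=x_0}=(d-2j)x_0^{-1}$ once one uses $x_0^{-(d-2j)}z=1$; your reindexing $r=d-m-j$ sends $m\in\{0,\dots,d\}\setminus\{d-j\}$ bijectively to $r\in\{-j,\dots,-1,1,\dots,d-j\}$, which is precisely the split into the two $q$-factorial blocks; and the regularity at $x=0$ holds for all $d\ge1$ because the factor with $m=d$ already contributes a zero of order $d$ there, swamping the simple pole from $dx/x$. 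The passage from ``generic $z$ with simple poles'' to all $z$ is indeed just analytic continuation of an identity between rational functions.

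One small wording slip: you write that the factors with $d-2m<0$ ``vanish there to high order'', but it is their \emph{reciprocals} $(1-x^{d-2m}z)^{-1}$ that vanish at $x=0$; the factors themselves blow up. This does not affect the argument.
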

It is well known that transcendence degree over $\mathbb{C}$ of the quotient field  for the algebra of covariants $\mathcal{C}_d$ coincides with  the order of pole $z=1$ for the rational function $\mathcal{P}(\mathcal{C}_d,z)$ and equals $d.$ Therefore, the first terms  of the Laurent series 
 for $\mathcal{P}(\mathcal{C}_d,z)$ at the point $z=1$ are 
$$
\mathcal{P}(\mathcal{C}_d,z)=\frac{\deg(\mathcal{C}_d)}{(1-z)^{d}}+\frac{\psi(\mathcal{C}_d)}{(1-z)^{d-1}}+\cdots
$$

In order to calculate the rational coefficients $\deg(\mathcal{C}_d), \psi(\mathcal{C}_d)$ we shall prove several auxiliary facts.  

\begin{lm} The following statements hold:
$$
\begin{array}{ll}
(i) & \text{ the  first terms   of  the Taylors series for  the function $(z^2,z^2)_j$  at  $z=1$ are}\\

& (z^2,z^2)_j=2^j j! (1-z)^j-2^{j-1} j!\, j^2 (1-z)^{j+1}+\cdots;\\

(ii) & \text{ the first terms  of the Laurent series for the function
$\dfrac{(-1)^j z^{j(j+1)} (1+z)}{(z^2,z^2)_j\,(z^2,z^2)_{d-j}}$ at  $z=1$ are}\\ \\
& \displaystyle \frac{(-1)^j}{2^{d-1} j! (d-j)!}\frac{1}{(1-z)^{d}}+\frac{(-1)^j}{2^{d-1} j! (d-j)!}(d+1)\left(\frac{1}{2}d-j-\frac 12 \right)\frac{1}{(1-z)^{d-1}}+\cdots 
\end{array}
$$
\end{lm}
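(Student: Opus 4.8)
The plan is to obtain both expansions by the substitution $t=1-z$ (so that the Laurent series at $z=1$ becomes an expansion in powers of $t$) together with elementary Taylor expansion, using the product formula $(z^2,z^2)_j=\prod_{k=1}^{j}(1-z^{2k})$.

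For part (i) I would expand each factor separately. Since $z^{2k}=(1-t)^{2k}=1-2kt+k(2k-1)t^2-\cdots$, we get $1-z^{2k}=2kt\bigl(1-\tfrac{2k-1}{2}t+\cdots\bigr)$. Taking the product over $k=1,\dots,j$ produces the leading factor $\prod_{k=1}^{j}2k=2^j j!$ times $t^j$, while the first correction is controlled by $\sum_{k=1}^{j}\tfrac{2k-1}{2}=\tfrac12\sum_{k=1}^{j}(2k-1)=\tfrac{j^2}{2}$. Hence $(z^2,z^2)_j=2^j j!\,t^j\bigl(1-\tfrac{j^2}{2}t+\cdots\bigr)$, which is exactly the claimed formula after replacing $t$ by $1-z$. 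The only point that needs care here is the identity $\sum_{k=1}^{j}(2k-1)=j^2$.

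For part (ii) I would feed part (i) into the denominator: $(z^2,z^2)_j(z^2,z^2)_{d-j}=2^{d}j!(d-j)!\,t^{d}\bigl(1-\tfrac{j^2}{2}t+\cdots\bigr)\bigl(1-\tfrac{(d-j)^2}{2}t+\cdots\bigr)$, so that $\bigl[(z^2,z^2)_j(z^2,z^2)_{d-j}\bigr]^{-1}=\bigl(2^{d}j!(d-j)!\,t^{d}\bigr)^{-1}\bigl(1+\tfrac{j^2+(d-j)^2}{2}t+\cdots\bigr)$ by geometric inversion. For the numerator, $z^{j(j+1)}=(1-t)^{j(j+1)}=1-j(j+1)t+\cdots$ and $1+z=2\bigl(1-\tfrac t2\bigr)$, giving $z^{j(j+1)}(1+z)=2-\bigl(2j(j+1)+1\bigr)t+\cdots$. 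Multiplying the numerator by the expanded inverse denominator, the constant term is $2$ and the coefficient of $t$ is $j^2+(d-j)^2-2j(j+1)-1$; this simplifies, via the difference of squares $(d-j)^2-(j+1)^2=(d+1)(d-2j-1)$, to $(d+1)(d-2j-1)$. Dividing by $2^{d}j!(d-j)!\,t^{d}$ and absorbing the factor $2$ into the first coefficient yields $\dfrac{(-1)^j}{2^{d-1}j!(d-j)!}\dfrac{1}{(1-z)^{d}}+\dfrac{(-1)^j(d+1)(d-2j-1)}{2^{d}j!(d-j)!}\dfrac{1}{(1-z)^{d-1}}+\cdots$, and rewriting $\tfrac{d-2j-1}{2}=\tfrac d2-j-\tfrac12$ gives precisely the stated Laurent expansion.

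Both parts are purely computational, and I do not expect any genuine obstacle beyond careful bookkeeping of the second-order terms. The single place where a clean algebraic identity is required — and where an arithmetic slip would be easiest — is the simplification of the coefficient of $t$ in (ii) as the difference of two squares $(d-j)^2-(j+1)^2$; the rest is routine expansion of $(1-t)^m$ and inversion of a power series with constant term $1$.
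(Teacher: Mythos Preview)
Your proposal is correct and follows essentially the same route as the paper: both expand each factor $1-z^{2k}$ to second order in $1-z$, use $\sum_{k=1}^{j}(2k-1)=j^2$ for part~(i), expand the numerator $(1+z)z^{j(j+1)}$ the same way, and then divide the two series (the paper via the formula $\frac{a_0+a_1x+\cdots}{b_0+b_1x+\cdots}=\frac{a_0}{b_0}+\frac{a_1b_0-a_0b_1}{b_0^2}x+\cdots$, you via geometric inversion of the denominator, which is the same thing). Your explicit identification of the simplification as the difference of squares $(d-j)^2-(j+1)^2=(d+1)(d-2j-1)$ is in fact a small expository improvement over the paper, which jumps directly to the final answer.
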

\begin{proof}
$(i)$  We have
$$
(z^2,z^2)_j=(1-z^2)(1-z^4) \ldots (1-z^{2j}).
$$
Let us expand the polynomial $ 1-z ^ n$ in the  Taylor series about $ 1-z.$ We have
$$
1-z^n=-n (z-1)-\frac{n(n-1)}{2!}(z-1)^2+\cdots=n(1-z)-\frac{n(n-1)}{2!}(1-z)^2+O((1-z)^{3})
$$
Therefore
\begin{align*}
&(z^2,z^2)_j=(1-z^2)(1-z^4) \ldots (1-z^{2j})=\\&=(2(1-z)-\frac{2}{2!}(1-z)^2+\cdots)(4(1{-}z)-\frac{4\cdot 3}{2!}(1{-}z)^2+\cdots)\ldots(2j(1{-}z)-\frac{2j(2j{-}1)}{2!}(1{-}z)^2+\cdots)=\\&=(2\cdot 4 \cdots 2j (1{-}z)^j +(1+3+5 \cdots+2j-1) 2^{j{-}1}j!(1{-}z)^{j+1}+ \cdots)=\\&=2^j j! (1-z)^j-2^{j-1} j!\, j^2 (1-z)^{j+1}+\cdots
\end{align*}

It follows that 
\begin{gather*}
(z^2,z^2)_j (z^2,z^2)_{d-j}=(2^j j! (1-z)^j-2^{j-1} j!\, j^2 (1-z)^{j+1}+\cdots) \times \\ \times (2^{d-j} (d-j)! (1-z)^{d-j}-2^{d-j-1} (d-j)!\, (d-j)^2 (1-z)^{d-j+1}+\cdots)=\\=2^d j! (d-j)! (1-z)^d-2^{d-1} j! (d-j)! ((d-j)^2+j^2)(1-z)^{d+1}+\cdots
\end{gather*}

$(ii)$  To find the first terms of  the Laurent series for the function 
$$
\frac{(-1)^j z^{j(j+1)} (1+z)}{(z^2,z^2)_j\,(z^2,z^2)_{d-j}}. 
$$
we expand the numerator  in the  Taylor series by  $(1-z)$. We have
\begin{align*}
&1+z=2-(1-z),\\
&z^{j(j+1)}=1-j(j+1)(1-z)+\cdots,\\
&(1+z)z^{j(j+1)}=2-(2j(j+1)+1)(1-z)+\cdots
\end{align*}

It is easy to check that  the following decomposition holds: 

$$
\frac{a_0+a_1 x+ \cdots}{b_0+b_1x+\cdots}=\frac{a_0}{b_0}+\frac{a_1b_0-a_0b_1}{b_0^2}x+ \cdots, b_0 \neq 0.
$$

Then 
\begin{gather*}
\frac{(-1)^j z^{j(j+1)} (1+z)}{(z^2,z^2)_j\,(z^2,z^2)_{d-j}}=\frac{2-(2j(j+1)+1)(1-z)+\cdots}{2^d j! (d-j)! (1-z)^d-2^{d-1} j! (d-j)! ((d-j)^2+j^2)(1-z)^{d+1}+\cdots}=\\=
\frac{1}{(1-z)^d}\,\frac{2-(2j(j+1)+1)(1-z)+\cdots}{2^d j! (d-j)! -2^{d-1} j! (d-j)! ((d-j)^2+j^2)(1-z)+\cdots}=\\
=\frac{1}{(1-z)^d} \, \left(  \frac{1}{2^{d-1} j! (d-j)!} +\frac{(-1)^j}{2^{d-1} j! (d-j)!}(d+1)\left(\frac{1}{2}d-j-\frac 12 \right)(1-z)+\cdots \right). 
\end{gather*}
\end{proof}

The following lemma shows how the function  $ \varphi_n$ acts on the negative powers of $1-z.$
\begin{lm}
$$
 \varphi_n \left( \dfrac{1}{(1-z)^{h}} \right)=\sum_{i=0}^k \frac{\alpha_{n\,i}}{(1-z)^{i}}, h \in \mathbb{N}.
 $$
here $\alpha_{nh}=n^{h-1}, $ $\alpha_{n,h-1}=-n^{n-2}(n-1) \dfrac{h}{2}.$
\end{lm}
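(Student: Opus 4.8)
The plan is to pack all the functions $\varphi_n\bigl((1-z)^{-h}\bigr)$, $h\ge 1$, into one generating function in an auxiliary variable $t$, recover each of them as a Taylor coefficient in $t$, and then Laurent-expand at $z=1$. The starting point is the identity
$$
\sum_{h\ge 1} t^{h-1}\,\varphi_n\!\left(\frac{1}{(1-z)^{h}}\right)=\frac{(1-t)^{n-1}}{(1-t)^{n}-z}.
$$
To prove it, observe first that on a formal power series $\sum_m a_m z^m$ the operator $\varphi_n$ acts simply by $\sum_m a_m z^m\mapsto\sum_l a_{nl}z^l$, because $\frac1n\sum_{j=0}^{n-1}\zeta_n^{jm}$ equals $1$ if $n\mid m$ and $0$ otherwise; in particular $\varphi_n$ is linear. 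Since $\sum_{h\ge1}t^{h-1}(1-z)^{-h}=\dfrac1{(1-t)-z}=\dfrac1{1-t}\sum_{m\ge0}\bigl(\tfrac{z}{1-t}\bigr)^{m}$, applying $\varphi_n$ together with the $n$-section rule gives $\dfrac1{1-t}\sum_{l\ge0}\bigl(\tfrac{z}{(1-t)^{n}}\bigr)^{l}=\dfrac{(1-t)^{n-1}}{(1-t)^{n}-z}$, which is the right-hand side.

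Next, from the identity, $\varphi_n\bigl((1-z)^{-h}\bigr)=\big[t^{h-1}\big]\dfrac{(1-t)^{n-1}}{(1-t)^{n}-z}$. Put $s=1-z$ and use $(1-t)^{n}=1-\bigl(nt-\binom n2 t^{2}+\cdots\bigr)$, so the denominator equals $s-\bigl(nt-\binom n2 t^{2}+\cdots\bigr)$ and
$$
\frac{(1-t)^{n-1}}{(1-t)^{n}-z}=\frac1s\,(1-t)^{n-1}\sum_{r\ge0}\Bigl(\tfrac ns\,t-\tfrac{\binom n2}{s}\,t^{2}+\cdots\Bigr)^{r}.
$$
Each of the $r$ factors in the inner power carries exactly one power of $1/s$, so the coefficient of $t^{h-1}$ on the right equals $\frac1s$ times a polynomial in $1/s$ of degree $h-1$; hence $\varphi_n\bigl((1-z)^{-h}\bigr)$ is a finite sum of negative powers of $1-z$ of the asserted shape, with a pole of order exactly $h$ at $z=1$. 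The top coefficient picks up only the ``pure'' contribution $(1-t)^{n-1}\to1$ together with every bracket factor equal to $\tfrac ns t$, giving $\alpha_{nh}=n^{h-1}$.

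For $\alpha_{n,h-1}$ one collects the contributions of order $s^{-(h-1)}$, all of which come from the summand $r=h-2$: (a) the $-(n-1)t$ term of $(1-t)^{n-1}$ times the lowest term $(\tfrac ns t)^{h-2}$ of the bracket power, contributing $-(n-1)n^{h-2}$; and (b) $(1-t)^{n-1}\to1$ times the term of the bracket power in which one factor is $-\tfrac{\binom n2}{s}t^{2}$ (chosen in one of $h-2$ positions) while the rest are $\tfrac ns t$, contributing $-(h-2)\binom n2 n^{h-3}=-\tfrac{h-2}{2}(n-1)n^{h-2}$. Adding, $\alpha_{n,h-1}=-(n-1)n^{h-2}\bigl(1+\tfrac{h-2}{2}\bigr)=-n^{h-2}(n-1)\,\tfrac h2$. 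The one step that requires genuine care is precisely this last bit of bookkeeping --- making sure that every contribution of order $s^{-(h-1)}$, and no others, is counted with its correct multiplicity; everything else is formal manipulation. (One can also dispense with $t$ entirely: the $n$-section rule gives $\varphi_n\bigl((1-z)^{-h}\bigr)=\sum_{l\ge0}\binom{nl+h-1}{h-1}z^{l}$, the coefficient $P(l)=\binom{nl+h-1}{h-1}$ is a polynomial of degree $h-1$ in $l$, and expanding it as $P(l)=\sum_i c_i\binom li$ converts the sum into $\sum_i c_i\,z^{i}(1-z)^{-i-1}$; reading $c_{h-1}$ and $c_{h-2}$ off from the two leading coefficients of $P$ and re-expanding each term at $z=1$ yields the same two constants.)
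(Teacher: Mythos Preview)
Your proof is correct, and it takes a genuinely different route from the paper's.

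The paper invokes an identity from \cite{BB} (their Lemma~4) to rewrite
\[
\varphi_n\!\left(\frac{1}{(1-z)^{h}}\right)=\frac{\varphi_n\bigl((1+z+\cdots+z^{n-1})^{h}\bigr)}{(1-z)^{h}},
\]
so that the numerator is a polynomial in $z$; the two coefficients $\alpha_{nh}$ and $\alpha_{n,h-1}$ are then read off as the value and (minus) the derivative of that numerator at $z=1$, with the derivative step handled by explicitly differentiating the averaged sum over $n$th roots of unity. Your approach avoids the external lemma entirely: you observe directly from the $n$-section description of $\varphi_n$ that the full family $\{\varphi_n((1-z)^{-h})\}_{h\ge1}$ is encoded in the closed form $\dfrac{(1-t)^{n-1}}{(1-t)^{n}-z}$, and then expand this around $z=1$ as a formal series in $t$. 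This is more self-contained and systematically yields \emph{all} the $\alpha_{n,i}$ at once (your $r=h-2$ bookkeeping is correct: each bracket factor carries exactly one power of $1/s$, so only $r=h-2$ feeds the $s^{-(h-1)}$ layer). The paper's method, by contrast, is tailored to the two leading coefficients and leans on a cleaner structural identity but at the cost of citing prior work. Your parenthetical alternative via $\sum_{l}\binom{nl+h-1}{h-1}z^{l}$ is also valid and is perhaps the most elementary of the three arguments. Finally, note that the displayed exponent $n^{n-2}$ in the statement is a typo for $n^{h-2}$; your computation recovers the intended value.
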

\begin{proof}
Using {Lemma 4} of the  article \cite{BB}, we get
$$
 \varphi_n \left( \dfrac{1}{(1-z)^{h}} \right)=\dfrac{\varphi_n \left( (1+z+z^2+\cdots+z^{n-1})^h\right)}{(1-z)^{h}}.$$
  Obviously $\alpha_{nh}$ is the remainder after  the division of $\varphi_n \left( (1+z+z^2+\cdots+z^{n-1})^h\right)$ by $(1-z).$

Using the definition of the function  $\varphi_n$ we get
\begin{gather*}
\varphi_n \left( (1+z+z^2+\cdots+z^{n-1})^h\right)=\frac{1}{n}\sum_{j=0}^{n-1} \left(1+\zeta_n^j z+(\zeta_n^j)^2 z^2+\cdots +(\zeta_n^j)^{n-1}z^{(n-1)}\right)^h \Bigl |_{z^n=z}.
\end{gather*}

The remainder of  division of this polynomial by  $(1-z)$ is equal to its value at the point $z=1$.  Thus
$$
\varphi_n \left( (1+z+z^2+\cdots+z^{n-1})^h\right)\Bigl |_{z=1}=\frac{1}{n}\sum_{j=0}^{n-1} \left(1+\zeta_n^j +(\zeta_n^j)^2 +\cdots +(\zeta_n^j)^{n-1}\right)^h=n^{h-1}.
$$

Obviously $\alpha_{n,h-1}$ is the coefficient of  $(1-z)$ in Taylor series expansion for $\varphi_n \left( (1+z+z^2+\cdots+z^{n-1})^h\right)$ at the point $z=1.$ Therefore
\begin{gather*}
\alpha_{n,h-1}=-\lim_{z \to 1}(\varphi_n \left( (1+z+z^2+\cdots+z^{n-1})^h\right))'.
\end{gather*}
We have
\begin{gather*}
\left(\frac{1}{n}\sum_{j=0}^{n-1} \left(1+\zeta_n^j z+(\zeta_n^j)^2 z^2+\cdots +(\zeta_n^j)^{n-1}z^{(n-1)}\right)^h  \right)'=\\
=\frac{h}{n}\sum_{j=0}^{n-1}  (1+\zeta_n^j z+(\zeta_n^j)^2 z^2+\cdots +(\zeta_n^j)^{n-1}z^{(n-1)})^{h-1} (\zeta_n^j +2(\zeta_n^j)^2 z+\cdots +(n-1)(\zeta_n^j)^{n-1}z^{(n-2)}). 
\end{gather*}
It now follows that 
\begin{gather*}
\lim_{z \to 1}\left(\frac{1}{n}\sum_{j=0}^{n-1} \left(1+\zeta_n^j z+(\zeta_n^j)^2 z^2+\cdots +(\zeta_n^j)^{n-1}z^{(n-1)}\right)^h  \right)'=\\
=\frac{h}{n}\sum_{j=0}^{n-1}  (1+\zeta_n^j +(\zeta_n^j)^2 +\cdots +(\zeta_n^j)^{n-1})^{h-1} (\zeta_n^j +2(\zeta_n^j)^2 +\cdots +(n-1)(\zeta_n^j)^{n-1})=\\
=\frac{h}{n} n^{h-1} (1+2+\ldots +(n-1))=\frac{1}{2} h(n-1)n^{h-1}.
\end{gather*}
 By using the relation $$
 \lim_{z \to 1}\left( f(z^n)|_{z^n=z}\right)'=\frac{1}{n}\lim_{z \to 1}f'(z^n),
 $$  we get
\begin{gather*}
\alpha_{n,h-1}= -\lim_{z \to 1}(\varphi_n \left( (1+z+z^2+\cdots+z^{n-1})^h\right))'=\\=
-\frac{1}{n}\lim_{z \to 1}\left(\frac{1}{n}\sum_{j=0}^{n-1} \left(1+\zeta_n^j z+(\zeta_n^j)^2 z^2+\cdots +(\zeta_n^j)^{n-1}z^{(n-1)}\right)^h  \right)'=-\frac{1}{2} h(n-1)n^{h-2}.
\end{gather*}
\end{proof}
Now we can compute  $\deg(\mathcal{C}_d), \psi(\mathcal{C}_d)$.
\begin{te}
\begin{align*}
&\deg(\mathcal{C}_d)=\lim_{z \to 1} (1-z)^d \mathcal{P}(\mathcal{C}_d,z)=\frac{1}{d!}\sum_{0 \leq j  < d/2}(-1)^j { d \choose j} \left(\frac{d}{2}-j \right)^{d-1},\\
&\psi(\mathcal{C}_d)=\lim_{z \to 1}\left( -(1-z)^d \mathcal{P}(\mathcal{C}_d,z)\right)'_z=\frac{1}{2}\deg(\mathcal{C}_d).
\end{align*}
\end{te}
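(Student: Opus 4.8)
The plan is to substitute the Laurent expansion of Lemma 2.2(ii) into the formula of Theorem 2.1 and to read off, summand by summand, the coefficients of $(1-z)^{-d}$ and $(1-z)^{-(d-1)}$ at $z=1$. Put $n_j=d-2j$ and $c_j=\dfrac{(-1)^j}{2^{d-1}j!(d-j)!}=\dfrac{(-1)^j}{2^{d-1}d!}\binom dj$, and let $g_j$ be the rational function to which $\varphi_{d-2j}$ is applied in Theorem 2.1. From $\varphi_n(f)(z^n)=\frac1n\sum_{k=0}^{n-1}f(\zeta_n^kz)$ one sees that the polar part of $\varphi_{n_j}(g_j)$ at $z=1$ is obtained by applying $\varphi_{n_j}$ to the polar parts of $g_j$ at the $n_j$-th roots of unity, each such polar part producing a pole of the same order. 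Factoring $(z^2,z^2)_j(z^2,z^2)_{d-j}=\prod_{i\le j}(1-z^{2i})\prod_{i\le d-j}(1-z^{2i})$ shows $g_j$ has a pole of order $d$ at $z=1$, a pole of order $d-1$ at $z=-1$ (the numerator factor $1+z$ removes one order), and poles of order $\le\lfloor d/2\rfloor$ elsewhere. Since $n_j\equiv d\pmod2$, among the $n_j$-th roots of unity only $z=1$ (always) and $z=-1$ (exactly when $d$ is even) carry a pole of $g_j$ of order $\ge d-1$; a short check shows the remaining poles are too low-order to affect the two leading Laurent coefficients of $\varphi_{n_j}(g_j)$ at $z=1$.

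For the degree, the coefficient of $(1-z)^{-d}$ in $\varphi_{n_j}(g_j)$ comes only from the order-$d$ pole of $g_j$ at $z=1$, whose leading coefficient is $c_j$ by Lemma 2.2(ii); by the part $\alpha_{nh}=n^{h-1}$ of Lemma 2.3 it contributes $c_jn_j^{\,d-1}$. Hence, with $(d-2j)^{d-1}=2^{d-1}(\tfrac d2-j)^{d-1}$,
$$
\deg(\mathcal{C}_d)=\sum_{0\le j<d/2}c_j\,n_j^{\,d-1}=\frac1{d!}\sum_{0\le j<d/2}(-1)^j\binom dj\Bigl(\frac d2-j\Bigr)^{d-1}.
$$

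For $\psi(\mathcal{C}_d)$ — the coefficient of $(1-z)^{-(d-1)}$ in $\sum_j\varphi_{n_j}(g_j)$ — I collect the contributions. From the pole of $g_j$ at $z=1$, where $g_j(z)=\dfrac{c_j}{(1-z)^d}+\dfrac{e_j}{(1-z)^{d-1}}+\cdots$ with $e_j=c_j(d+1)\bigl(\tfrac d2-j-\tfrac12\bigr)=c_j(d+1)\tfrac{n_j-1}2$: by Lemma 2.3 applied with $h=d$ to the first term (subleading coefficient $-n_j^{\,d-2}(n_j-1)\tfrac d2$) and with $h=d-1$ to the second (leading coefficient $n_j^{\,d-2}$), the total is $-c_jn_j^{\,d-2}(n_j-1)\tfrac d2+e_jn_j^{\,d-2}=\tfrac12c_jn_j^{\,d-2}(n_j-1)$. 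When $d$ is even there is one further contribution, from the pole of $g_j$ at $z=-1$: here I would use the elementary identity $g_j(-w)=\dfrac{1-w}{1+w}\,g_j(w)$ (valid since $j(j+1)$ is even, so $(-w)^{j(j+1)}=w^{j(j+1)}$, while $(z^2,z^2)_m$ is unchanged under $z\mapsto-z$), which shows the polar part of $g_j$ at $z=-1$, expanded in powers of $1+z$, has leading term $\dfrac{c_j/2}{(1+z)^{d-1}}$; combined with $\varphi_n\bigl(f(-z)\bigr)=\varphi_n\bigl(f(z)\bigr)$ for even $n$ (the points $-\zeta_n^k=\zeta_n^{k+n/2}$ again run through all $n$-th roots of unity), so $\varphi_{n_j}\bigl((1+z)^{-(d-1)}\bigr)=\varphi_{n_j}\bigl((1-z)^{-(d-1)}\bigr)$ has leading coefficient $n_j^{\,d-2}$, this extra contribution equals $\tfrac12c_jn_j^{\,d-2}$.

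Summing over $j$: when $d$ is even, $\psi(\mathcal{C}_d)=\sum_j\bigl(\tfrac12c_jn_j^{\,d-2}(n_j-1)+\tfrac12c_jn_j^{\,d-2}\bigr)=\tfrac12\sum_jc_jn_j^{\,d-1}=\tfrac12\deg(\mathcal{C}_d)$, and we are done. When $d$ is odd only the first contribution occurs, giving $\psi(\mathcal{C}_d)=\tfrac12\sum_jc_jn_j^{\,d-2}(n_j-1)$, so that $\tfrac12\deg(\mathcal{C}_d)-\psi(\mathcal{C}_d)=\tfrac12\sum_jc_jn_j^{\,d-2}=\dfrac1{2^{d}d!}\sum_{0\le j<d/2}(-1)^j\binom dj(d-2j)^{d-2}$, and it remains to see this vanishes. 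For that, $(e^x-e^{-x})^d=\sum_{k=0}^d(-1)^k\binom dk e^{(d-2k)x}$ has a zero of order $d$ at $x=0$, so its $(d-2)$-th derivative there gives $\sum_{k=0}^d(-1)^k\binom dk(d-2k)^{d-2}=0$; the involution $k\mapsto d-k$ sends the $k$-th summand to the $(d-k)$-th, which are equal, and for odd $d$ it has no fixed point, so $\sum_{0\le j<d/2}(-1)^j\binom dj(d-2j)^{d-2}=\tfrac12\cdot 0=0$. I expect the one genuinely delicate point to be the contribution from $z=-1$ when $d$ is even: a naive term-by-term application of $\varphi_{n_j}$ to the $z=1$ Laurent expansion of $g_j$ already yields the wrong $\psi$ (it fails already for $d=2$), because for even $n_j$ the operator $\varphi_{n_j}$ folds the pole of $g_j$ at $z=-1$ into the pole at $z=1$; recognising and evaluating this is the crux, whereas the two applications of Lemma 2.3, the coefficient bookkeeping, and the binomial identity are routine.
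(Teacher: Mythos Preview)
Your argument is correct and follows the same skeleton as the paper: plug the Laurent data of Lemma~2.1(ii) into the sum of Theorem~2.1 and use Lemma~2.2 to evaluate $\varphi_{d-2j}$ on negative powers of $1-z$. The degree computation is identical to the paper's.

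Where you genuinely add something is in the treatment of $\psi(\mathcal{C}_d)$. The paper simply applies $\varphi_{d-2j}$ term by term to the Laurent expansion of $g_j$ at $z=1$ and discards the remainder. You observe, correctly, that this is not automatically safe: the remainder still carries the pole of $g_j$ at $z=-1$, which has order $d-1$, and for even $d$ the point $-1$ is an $n_j$-th root of unity, so $\varphi_{n_j}$ folds that pole into the pole at $1$ and it \emph{does} contribute to the coefficient of $(1-z)^{-(d-1)}$. Your evaluation of this extra piece via the identity $g_j(-w)=\dfrac{1-w}{1+w}\,g_j(w)$, together with $\varphi_n\bigl((1+z)^{-h}\bigr)=\varphi_n\bigl((1-z)^{-h}\bigr)$ for even $n$, gives exactly the missing $\tfrac12 c_j n_j^{\,d-2}$ and closes the even case. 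For odd $d$ your use of the vanishing $\sum_{k=0}^d(-1)^k\binom dk(d-2k)^{d-2}=0$ (from the order-$d$ zero of $(e^x-e^{-x})^d$) together with the $k\mapsto d-k$ symmetry is a clean way to finish.

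In short: same method, but your version actually justifies why the ``$\cdots$'' can be ignored, and supplies the one place where it cannot. The paper's displayed intermediate line even has a coefficient slip (a factor $d-1$ where Lemma~2.2 gives $d$), so your proof is both more careful and more reliable. The only point you leave as a ``short check'' is the bound on the pole orders of $g_j$ at $n_j$-th roots of unity other than $\pm1$; it is indeed routine (counting factors $1-z^{2i}$ vanishing at a primitive $m$-th root with $m\geq 3$ gives order at most $\lfloor 2d/m\rfloor\leq \lfloor 2d/3\rfloor<d-1$ for $d\geq 3$), but worth one explicit sentence.
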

\begin{proof}
Using {Lemma 1} and {Lemma 2} we get
\begin{gather*}
\mathcal{P}(\mathcal{C}_d,z)=\sum_{0 \leq j  < d/2}\varphi_{ d-2j}\left(\frac{(-1)^j z^{j(j+1)} (1+z)}{(z^2,z^2)_j\,(z^2,z^2)_{d-j}}\right)=\sum_{0 \leq j  < d/2}\varphi_{ d-2j}\left(  \frac{(-1)^j}{2^{d-1} j! (d-j)!}\frac{1}{(1-z)^{d}}+\cdots  \right)=\\=
\sum_{0 \leq j  < d/2}\frac{(-1)^j}{2^{d-1} j! (d-j)!} \varphi_{ d-2j}\left(  \frac{1}{(1-z)^{d}}  \right)+\\+\sum_{0 \leq j  < d/2}\frac{(-1)^j}{2^{d-1} j! (d-j)!}(d+1)\left(\frac{1}{2}d-j-\frac 12 \right)\varphi_{ d-2j}\left(\frac{1}{(1-z)^{d-1}} \right)+\cdots=\\=
\frac{1}{(1-z)^{d}} \sum_{0 \leq j  < d/2}\frac{(-1)^j (d-2j)^{d-1}}{2^{d-1} j! (d-j)!}-\frac{1}{(1-z)^{d-1}} \frac{1}{2}\sum_{0 \leq j  < d/2}\frac{(-1)^j}{2^{d-1} j! (d-j)!}(d-2j)^{d-2}(d-2j-1)(d-1)+\\
+\frac{1}{(1-z)^{d-1}} \frac{1}{2}\sum_{0 \leq j  < d/2}\frac{(-1)^j}{2^{d-1} j! (d-j)!}(d+1)(d-2j-1)(d-2j)^{d-2}+\cdots
\end{gather*}
Thus the coefficient of $\dfrac{1}{(1-z)^d}$ is 
\begin{gather*}
\deg(\mathcal{C}_d)=\sum_{0 \leq j  < d/2}\frac{(-1)^j (d-2j)^{d-1}}{2^{d-1} j! (d-j)!}=\frac{1}{d!}\sum_{0 \leq j  < d/2}(-1)^j { d \choose j} \left(\frac{d}{2}-j \right)^{d-1},
\end{gather*}

and the coefficient of $\dfrac{1}{(1-z)^{d-1}}$ is 

\begin{gather*}
\psi(\mathcal{C}_d)=\frac{1}{2 d!}\sum_{0 \leq j  < d/2}(-1)^j { d \choose j} \left(\frac{d}{2}-j \right)^{d-1}.
\end{gather*}
\end{proof}
The algebra of invariants  $\mathcal{C}_d$ is Gorenstein. Then its Poincare series $\mathcal{P}(\mathcal{C}_d,z)$ satisfies the following functional equation:
$$
\mathcal{P}(\mathcal{C}_d,z^{-1})=(-1)^d z^q \mathcal{P}(\mathcal{C}_d,z),
$$
here $q$  is the difference between  the degrees of the numerator and denominator of the   rational function $\mathcal{P}(\mathcal{C}_d,z),$  (see \cite{St}).
Using \cite[Theorem 2]{Pop}, we obtain  $$\frac{\deg(\mathcal{C}_d)}{\psi(\mathcal{C}_d)}=q-d.$$
Using {Theorem 2} we get $q=d+1=\dim V_d.$
 
It follows that the algebra of covariants  $\mathcal{C}_d$ is free algebra only if $d=1.$ Indeed, if $\mathcal{C}_d$ is free, then $q$ is equal to the sum of degrees  of  the generating elements. It is easily to verified that all powers of elements (except for possibly one) in  a minimal generating   system of the algebra $\mathcal{C}_d$ is greater than  2. Therefore  $q \leq 2+3(d-1).$ Thus $d+1 \leq 2+3(d-1)$ and $d \leq 1.$


\section{Asymptotic behaviour of  $\deg(\mathcal{C}_d) $.}


Let us establish an  integral representation  for the degree $\deg(\mathcal{C}_d)$. We denote by
$$c_d:=\deg(\mathcal{C}_d) \cdot \, d!=\sum_{0 \leq j  < d/2} (-1)^j { d \choose j}\left( \frac{d}{2}-j \right)^{d-1}.$$

The following statement holds:
\begin{lm}

$$
\begin{array}{ll}

(i) & c_d=2\pi^{-1}(d-1)!\displaystyle \int \limits _{0}^{\infty}\frac{{\sin ^d x}}{x^d}  dx, \\

(ii) & \deg(\mathcal{C}_d)>0.
\end{array}
$$
\end{lm}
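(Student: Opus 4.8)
The plan is to reduce part $(i)$ to the classical evaluation
$$
\int_{-\infty}^{\infty}\frac{\sin^d x}{x^d}\,dx=\frac{\pi}{2^{d-1}(d-1)!}\sum_{0\le j<d/2}(-1)^j\binom dj (d-2j)^{d-1},
$$
and then to deduce $(ii)$ from it. Granting this identity, since $\left(\frac d2-j\right)^{d-1}=2^{1-d}(d-2j)^{d-1}$ its right-hand side is exactly $\pi\,c_d/(d-1)!$; as the integrand is an even function, $\int_0^\infty=\frac12\int_{-\infty}^\infty=\pi\,c_d/\bigl(2(d-1)!\bigr)$, which rearranges to the formula in $(i)$. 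So everything comes down to the displayed full-line identity.

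To prove it I would use contour integration, keeping the exponentials grouped so that the binomial sum emerges directly rather than through a separate combinatorial identity. Write $\sin^d z=(2i)^{-d}(e^{iz}-e^{-iz})^d=(2i)^{-d}\sum_{j=0}^d(-1)^j\binom dj e^{i(d-2j)z}$. The function $z\mapsto \sin^d z/z^d$ extends to an entire function, so by Cauchy's theorem the integral over $\mathbb R$ is unchanged if the real line is replaced by the path $C$ obtained by indenting it with a small semicircle above the origin; along $C$ the integral may be split term by term (each piece converges because $C$ avoids $0$), giving
$$
\int_{-\infty}^{\infty}\frac{\sin^d x}{x^d}\,dx=(2i)^{-d}\sum_{j=0}^{d}(-1)^j\binom dj\int_{C}\frac{e^{i(d-2j)z}}{z^d}\,dz .
$$
Closing $C$ by a large semicircle and using Jordan's lemma: the terms with $d-2j>0$ are closed in the upper half-plane and vanish, the only singularity $z=0$ lying below $C$; the terms with $d-2j<0$ are closed in the lower half-plane, which contains $z=0$ with negative orientation, so each contributes $-2\pi i$ times the residue of $e^{i(d-2j)z}/z^d$ at $z=0$, namely $\bigl(i(d-2j)\bigr)^{d-1}/(d-1)!$; the term with $d-2j=0$, present only when $d$ is even, contributes $0$.

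Thus only the indices $j>d/2$ survive; the substitution $j\mapsto d-j$ rewrites that sum over $0\le j<d/2$, the powers of $i$ collapse, and the displayed identity follows. For $(ii)$, $\deg(\mathcal C_d)=c_d/d!$ by the definition of $c_d$, so $(i)$ gives $\deg(\mathcal C_d)=\frac{2}{\pi d}\int_0^\infty(\sin x/x)^d\,dx$, and it suffices to prove this integral is positive. When $d$ is even the integrand is nonnegative and not identically zero. When $d$ is odd, on $(k\pi,(k+1)\pi)$ the integrand has constant sign $(-1)^k$, and $a_k:=\int_{k\pi}^{(k+1)\pi}|\sin x/x|^d\,dx$ is strictly decreasing in $k$ (substitute $x\mapsto x+\pi$ and use $(x+\pi)^{-d}<x^{-d}$), whence $\int_0^\infty(\sin x/x)^d\,dx=\sum_{k\ge0}(-1)^k a_k\ge a_0-a_1>0$.

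The delicate point is the contour manipulation: one must justify breaking the integral over the indented path into one exponential per term — individually $\int_{\mathbb R}e^{i\omega x}x^{-d}\,dx$ is meaningless, but $\int_{C}$ is well defined once the origin is avoided — verify that Jordan's lemma applies in the correct half-plane for each frequency, and carry the orientation and sign bookkeeping through correctly; the case $d=1$ is the conditionally convergent Dirichlet integral and is covered by the same argument with improper integrals in place of absolutely convergent ones. The remaining estimates are routine.
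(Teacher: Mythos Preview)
Your argument is correct, and for part $(ii)$ it is essentially the paper's own argument (pairing consecutive $\pi$-intervals and using $|\sin(x+\pi)|=|\sin x|$ together with $(x+\pi)^{-d}<x^{-d}$; the paper does both parities at once rather than observing that even $d$ is trivial).

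For part $(i)$, however, your route is genuinely different from the paper's. You go directly from $\int_{-\infty}^{\infty}\sin^d x/x^d\,dx$ to the binomial sum by residues: expand $\sin^d z$ into exponentials, indent above the origin, and close each frequency $e^{i(d-2j)z}/z^d$ in the appropriate half-plane, so that only the terms with $d-2j<0$ pick up a residue at $0$; reindexing $j\mapsto d-j$ produces the sum over $j<d/2$. The paper instead follows Springer's real-variable method: it symmetrises $c_d$ to introduce a factor $\operatorname{sign}(d/2-j)$, replaces each $\operatorname{sign}$ by the Dirichlet integral $\frac{\pi}{2}\operatorname{sign}(a)=\int_0^\infty\frac{\sin ax}{x}\,dx$, recognises the resulting frequency-weighted sum $\sum_j(-1)^j\binom dj(d/2-j)^{d-1}e^{i(d/2-j)x}$ as (up to a constant) the $(d-1)$-st derivative of $\sin^d(x/2)$, and then integrates by parts $d-1$ times to bring the derivative onto $1/x$. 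Your contour approach is cleaner and reaches the answer in one stroke, at the cost of the bookkeeping you flag (splitting over the indented path, orientation, Jordan's lemma, and the conditionally convergent case $d=1$); the paper's approach stays on the real line and makes explicit the link to the Wolstenholme formula that the paper later quotes, but requires the extra differentiation/integration-by-parts machinery.
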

\begin{proof} $(i)$ We have 
\begin{gather*}
2 c_d=\sum_{0 \leq j  < d/2} (-1)^j { d \choose j}\left( \frac{d}{2}-j \right)^{d-1}+\sum_{0 \leq j  < d/2} (-1)^j { d \choose j}\left( \frac{d}{2}-j \right)^{d-1}=\\=
\sum_{0 \leq j  < d/2} (-1)^j { d \choose j}\left( \frac{d}{2}-j \right)^{d-1}+\sum_{d/2 \leq j  < d} (-1)^j { d \choose j} {\rm sign}\left( \frac{d}{2}-j \right) \left( \frac{d}{2}-j \right)^{d-1}=\\
=\sum_{j=0}^d (-1)^j { d \choose j}{\rm sign} \left( \frac{d}{2}-j \right)\left( \frac{d}{2}-j \right)^{d-1}.
\end{gather*}

 We use that 
$$ \frac{\pi}{2}\, {\rm sign}(a)=\displaystyle \int \limits _{0}^{\infty}\frac{{\sin a x}}{x}  dx
$$
	Then 
	
\begin{gather*}
 \pi c_d=\frac{\pi}{2}\sum_{j=0}^d (-1)^j { d \choose j}{\rm sign} \left( \frac{d}{2}-j \right)\left( \frac{d}{2}-j \right)^{d-1}=\\=\sum_{j=0}^d (-1)^e { d \choose e}\left( \frac{d}{2}-e \right)^{d-1}\displaystyle \int \limits _{0}^{\infty}\frac{{\sin (\frac{d}{2}-j) x}}{x}  dx=\\=\displaystyle \int \limits _{0}^{\infty} {\rm Im} \left(\sum_{j=0}^d (-1)^j { d \choose j}\left( \frac{d}{2}-j \right)^{d-1}e^{i( \frac{d}{2}-j )x}\right)\frac{dx}{x}, i^2=-1.
\end{gather*}
We follows by the same method as   in \cite{SP}, Lemma 3.4.7.
We have 

\begin{gather*}
\sin ^d\frac{x}{2}=\left(\frac{e^{\frac{ix}{2}}-e^{-\frac{ix}{2}}}{2i}\right)^d=
	\frac{1}{2^d i^d}\sum_{j=0}^d  { d \choose j}\left(e^{\frac{ix}{2}}\right)^{d-j}\left(e^{-\frac{ix}{2}}\right)^j=
			\frac{1}{2^d i^d}\sum_{j=0}^ d (-1)^j { d \choose e}e^{ix(\frac{d}{2}-j)}.
	\end{gather*}
Differentiating $d-1$ times with respect to $x,$ we obtain
$$
\left(\sin ^d\frac{x}{2}\right)^{(d-1)}= \frac{i^{d-1}}{2^d i^d}\sum_{0 \leq e  \leq d} (-1)^e { d \choose e} \left(\frac{d}{2}-j\right)^{d-1}e^{ix(\frac{d}{2}-j)}.
$$
Hence 
$$
{\rm Im} \left(\sum_{j=0}^d (-1)^j { d \choose j}\left( \frac{d}{2}-j \right)^{d-1}e^{i( \frac{d}{2}-j )x}\right)=2^d  \left(\sin ^d\frac{x}{2}\right)^{(d-1)}.
$$

	Thus 
	
$$ c_d= \frac{1}{\pi } \int \limits _{0}^{\infty}2^d\left(\sin ^d\frac{x}{2}\right)^{(d-1)}\frac{dx}{x}=
						\frac{2}{\pi } \int \limits _{0}^{\infty}\left(\sin ^d x\right)^{(d-1)}\frac{dx}{x}
$$
	Integrating by parts $d-1$ times, we obtain
	$$ c_d=\frac{2(d-1)!}{\pi} \displaystyle \int \limits _{0}^{\infty}\frac{{\sin ^d x}}{x^d}  dx, $$

	$(ii)$ It is enougth to prove that $\displaystyle \int \limits _{0}^{\infty}\frac{{\sin ^d x}}{x^d}  dx>0. $  
 First of all we prove  that the  integral is  absolutely convergent. Let us split the integral into two parts:
$$\displaystyle \int \limits _{0}^{\infty}\frac{{\sin ^d x}}{x^d}  dx= \displaystyle \int \limits _{0}^{1}\frac{{\sin ^d x}}{x^d}  dx+\displaystyle \int \limits _{1}^{\infty}\frac{{\sin ^d x}}{x^d}  dx$$
Since  $\lim\limits_ {x\rightarrow 0}{\frac{\sin x}{x}}=1,$ then  the function  $\left(\frac{\sin x}{x}\right)^p$ is continuous on   $[0,1]$. Thus the first integral   is integrated on  $[0,1]$. Since 
$\left|\frac{{\sin ^d x}}{x^d}\right|  \leq \left|\frac{1}{x^d}\right|,$ then the second   integral is  absolutely convergent  for    $d>1$.

Now the integral can be represented in the form 
\begin{gather*}
\displaystyle \int \limits _{0}^{\infty}\frac{{\sin ^d x}}{x^d}  dx= \sum_{j=0} ^{\infty}{\left(\displaystyle \int \limits _{2 k\pi}^{(2k+1)\pi}\frac{{\sin ^d x}}{x^d}  dx+\displaystyle \int \limits _{(2k+1)\pi }^{4 k\pi}\frac{{\sin ^d x}}{x^d}  dx\right)}=\sum_{j=0} ^{\infty}{\left(\displaystyle \int \limits _{2 k\pi}^{(2k+1)\pi}\frac{{\sin ^d x}}{x^d}  dx+\displaystyle \int \limits _{2 k\pi}^{(2k+1)\pi}\frac{{\sin ^d (x+\pi)}}{(x+\pi)^d}  dx\right)}=\\=\sum_{j=0} ^{\infty}{\displaystyle \int \limits _{2 k\pi}^{(2k+1)\pi}\left(\frac{{\sin ^d x}}{x^d}  -\frac{{\sin ^d x}}{(x+\pi)^d}  \right) dx}=\sum_{j=0} ^{\infty}{\displaystyle \int \limits _{2 k\pi}^{(2k+1)\pi}\frac{{\sin ^d x}}{x^d (x+\pi)^d} \left((x+\pi)^d  -x^d\right) dx}>0,  d>1.
\end {gather*}
For the case $d=1$  we  have 
$$
\int \limits _{0}^{\infty}\frac{{\sin  x}}{x}  dx=\frac{\pi}{2}>0.
$$

\end {proof}



The condition $\deg(\mathcal{C}_d)>0$  is equivalent to that transcendence degree field of fractions of the algebra  $\mathcal{C}_d$ is equal to $d.$

Interestingly, in the general case   the  Wolstenholme formula  holds:
\begin{gather*}
\int_0^\infty \frac{\sin^p \,x}{x^s}=\frac{(-1)^{\frac{p-s}{2}}}{(s-1)!} \frac{\pi}{2^p} \sum_{p-2j>0}(-1)^j { p \choose j} (p-2j)^{s-1},
\end{gather*}
if $p-s$ is even, see. \cite[Problem 1033]{Ed}.

Finally, we deal with the asymptotic behavior of $\deg(\mathcal{C}_d )$  as $d$ tends to infinity. It amounts to the same by the previous lemma, to determine  the asymptotic behavior $\int \limits _{0}^{\infty}\dfrac{{\sin ^d x}}{x^d}  dx. $ 
\begin{te}

$$
\lim_{d\to\infty}d^{\frac{1}{2}}\displaystyle \int^{\infty}_{0}\frac{{\sin ^d x}}{x^d} dx=\frac{(6\pi)^{\frac{1}{2}}}{2}
$$

\end {te}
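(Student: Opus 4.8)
The plan is to obtain the limit by Laplace's method, i.e.\ the standard asymptotics of $I_d:=\int_0^\infty(\sin x/x)^d\,dx$: localize the integral near $x=0$, rescale, and recognize a Gaussian integral. The computation parallels the one in \cite{SP}. Two elementary ingredients are needed. The first is the bound
$$
0<\frac{\sin x}{x}\le e^{-x^2/6}\qquad(0\le x<\pi),
$$
which I would deduce from the product expansion $\sin x/x=\prod_{n\ge1}\bigl(1-x^2/(n^2\pi^2)\bigr)$: taking logarithms and expanding gives $\log(\sin x/x)=-\sum_{k\ge1}\frac{\zeta(2k)}{k\pi^{2k}}x^{2k}$, every term of which is negative on $(0,\pi)$, and whose leading term is $-\frac{\zeta(2)}{\pi^2}x^2=-x^2/6$; hence $\log(\sin x/x)\le-x^2/6$ there. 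The second ingredient is a tail estimate: fixing $\delta\in(0,\pi)$, on $[\delta,\pi]$ the function $\sin x/x$ is positive and decreasing, so $|\sin x/x|\le\sin\delta/\delta=:\rho_0<1$, while on $[\pi,\infty)$ one has $|\sin x/x|\le 1/x$. Therefore, for $d\ge2$,
$$
\int_\delta^\infty\Bigl|\frac{\sin x}{x}\Bigr|^{d}dx\le\rho_0^{\,d}(\pi-\delta)+\int_\pi^\infty x^{-d}\,dx=\rho_0^{\,d}(\pi-\delta)+\frac{\pi^{1-d}}{d-1},
$$
so $d^{1/2}\int_\delta^\infty|\sin x/x|^d\,dx\to0$; this piece (signed, for odd $d$) contributes nothing to the limit.

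For the main term I would work on $[0,\delta]$, where $\sin x>0$, and substitute $x=u/\sqrt d$:
$$
d^{1/2}\int_0^\delta\frac{\sin^d x}{x^d}\,dx=\int_0^{\delta\sqrt d}\left(\frac{\sin(u/\sqrt d)}{u/\sqrt d}\right)^{\!d}du .
$$
For each fixed $u\ge0$, writing $t=u/\sqrt d$ and using $\log(\sin t/t)=-t^2/6+O(t^4)$, the integrand equals $\exp\bigl(d\log(\sin t/t)\bigr)=\exp(-u^2/6+O(1/d))\to e^{-u^2/6}$. By the inequality of the first step, for $u/\sqrt d<\pi$ the integrand is bounded above by $\bigl(e^{-(u/\sqrt d)^2/6}\bigr)^{d}=e^{-u^2/6}$, and since $\delta<\pi$ this bound holds on the whole range of integration $[0,\delta\sqrt d]\subset[0,\pi\sqrt d)$. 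Viewing the right-hand side as $\int_0^\infty\mathbf 1_{[0,\delta\sqrt d]}(u)\,(\sin(u/\sqrt d)/(u/\sqrt d))^d\,du$, dominated convergence with dominating function $e^{-u^2/6}$ gives the limit $\int_0^\infty e^{-u^2/6}\,du=\tfrac12\sqrt{6\pi}$.

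Combining the two pieces, $\lim_{d\to\infty}d^{1/2}I_d=\tfrac12(6\pi)^{1/2}+0$, which is the claim.

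The inequalities in the first two steps are routine; the one point that needs care is the passage to the limit in the third step, because the interval of integration $[0,\delta\sqrt d]$ grows with $d$. This is precisely why one wants the uniform bound $\sin x/x\le e^{-x^2/6}$ on all of $[0,\pi)$, not merely a local bound near $0$: it is exactly what makes the rescaled integrand dominated by the single integrable function $e^{-u^2/6}$ throughout $[0,\delta\sqrt d]$. I expect this justification of the interchange of limit and integral to be the main (though still modest) obstacle.
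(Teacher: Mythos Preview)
Your proof is correct and follows the same overall strategy as the paper's: Laplace's method, splitting off a negligible tail away from $x=0$ and extracting a Gaussian integral from the main piece near $x=0$.

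The technical implementation differs in one useful way. The paper keeps a cutoff $\varepsilon$ that depends on $d$ (specifically $\varepsilon=\ln d/\sqrt d$), bounds $\int_\varepsilon^{\pi/2}(\sin x/x)^d\,dx=O(e^{-ad\varepsilon^2})$ from the monotonicity of $\sin x/x$, and on $[0,\varepsilon]$ writes $(\sin x/x)^d=e^{-dx^2/6+O(d\varepsilon^4)}$ before rescaling; the choice of $\varepsilon$ makes both error terms vanish. You instead fix $\delta<\pi$ once and for all, rescale immediately, and justify the limit by dominated convergence using the sharp global inequality $\sin x/x\le e^{-x^2/6}$ on $[0,\pi)$ from the product expansion of $\sin x$. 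Your route avoids tracking the interplay between $\varepsilon$ and $d$, at the cost of needing that extra inequality; the paper's route is more hands-on but uses nothing beyond the first two Taylor terms of $\sin x/x$. Either is a perfectly standard execution of Laplace's method.
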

\begin {proof}
Write $$I=\int^{\infty}_{0}\frac{{\sin ^d x}}{x^d} dx,$$

and  split the limit into two parts

$$I=\lim_{d\to\infty}d^{\frac{1}{2}}\displaystyle \int^{\infty}_{0}\frac{{\sin ^d x}}{x^d} dx=\lim_{d\to\infty}d^{\frac{1}{2}}\displaystyle \int^{\frac{\pi}{2}}_{0}\frac{{\sin ^d x}}{x^d} dx+\lim_{d\to\infty}d^{\frac{1}{2}}\displaystyle \int^{\infty}_{\frac{\pi}{2}}\frac{{\sin ^d x}}{x^d} dx.$$
Since
$$
\left|{\int^{\infty}_{\frac{\pi}{2}}\frac{{\sin ^d x}}{x^d} dx}\right| \leq\displaystyle \int^{\infty}_{\frac{\pi}{2}} x^{-d} dx=\left.\lim_{b\to\infty} \frac{x^{-d+1}}{1-d}\right|_{\frac{\pi}{2}}^b=\frac{1}{(d-1)x^{d-1}} \to  0,
$$
it follows that
$$ I=\lim_{d\to\infty}d^{\frac{1}{2}} \int^{\frac{\pi}{2}}_{0}\frac{{\sin ^d x}}{x^d} dx.$$
Fix $\varepsilon>0$, sufficiently small. Since  $\frac{\sin x}{x}$ is monotonically decreasing as $0\leq x \leq \frac {\pi}{2}$, it follows that 
$\frac{\sin x}{x}\leq \frac{\sin \varepsilon}{\varepsilon}=1-\frac{\varepsilon^2}{3!}+ \frac{\varepsilon^4}{5!}-\cdots$, as $\varepsilon\leq x \leq \frac {\pi}{2}.$ It readily follows that there exists a strictly positive constant $a$ such that
$$\int^{\frac{\pi}{2}}_{\varepsilon}\frac{{\sin ^d x}}{x^d} dx=O\left(e^{-a \cdot d\varepsilon^2}\right).$$
We have for $0\leq x \leq \varepsilon$ 
$$\left(\frac{\sin x}{x}\right)^d=\left(1-\frac{1}{6}x^2+O\left(\varepsilon^4\right)\right)^d=e^{-\frac{1}{6}x^2+O\left(\varepsilon^4\right)}.$$
Hence 
\begin{gather*}
\int^{\varepsilon}_{0}{\left(\frac{\sin  x}{x}\right)}^d dx=e^{O\left(d \varepsilon^4\right)}\int^{\varepsilon}_{0}\left(-\frac{1}{6}dx^2\right) dx=e^{O\left(d \varepsilon^4\right)}\int^{\varepsilon d^{\frac{1}{2}}}_{0}e^{-\frac{1}{6} x^2}dx
\end{gather*}
Now choose $\varepsilon=\frac{\ln d}{\sqrt{d}}$. Then the limit  reduces to the Euler-Poisson integral:
$$ I=\lim_{d\to\infty}d^{\frac{1}{2}}\displaystyle \int^{\frac{\pi}{2}}_{0}\frac{{\sin ^d x}}{x^d} dx=\sqrt{6}\int^{\infty}_{0} e^{-x^2}dx=\frac{\sqrt{6 \pi}}{2}.
$$
 
\end {proof}

Thus asymptotic behaviour  $\deg(\mathcal{C}_d)$ as $d \to \infty$  is follows:
$$
\deg(\mathcal{C}_d)=\frac{c_d}{d!}\sim  \sqrt{\frac{6 }{\pi}} \frac{1}{d^{\phantom{}^\frac{3}{2}}}.
$$

\end{document}